\title[The Hermitian Axiom on 2D TQFTs]{The Hermitian Axiom on Two-Dimensional Topological Quantum Field Theories}
\author[H. Zhu]{Honglin Zhu}
\begin{document}

\begin{abstract}
We examine Atiyah's Hermitian axiom for two-dimensional complex topological quantum field theories. Building on the correspondence between 2D TQFTs and Frobenius algebras, we find the algebraic objects corresponding to Hermitian and unitary TQFTs respectively and prove structure theorems about them. We then clarify a few older results on unitary TQFTs using our structure theorems. 
\end{abstract}

\maketitle
\section{Introduction}
A topological quantum field theory in dimension $d$, by Atiyah's axiomatic formulation in (\cite{atiyah1989}), is a rule $Z$ which assigns a vector space $Z(\Sigma)$ to each closed oriented $(d-1)$-dimensional manifold $\Sigma$ (throughout the paper, all manifolds are understood to be compact, smooth, and orientable), and to each oriented smooth $d$-dimensional manifold $M$ with boundary $\Sigma$ an element $Z(M) \in Z(\Sigma)$ (see \Cref{sec_axioms}). 

Another model of a $d$-dimensional TQFT is to view it as a linear representation of $dCob$, the category of $d$-dimensional cobordisms. A cobordism from an oriented $(d-1)$-dimensional manifold $\Sigma_1$ to another oriented $(d-1)$-dimensional manifold $\Sigma_2$ is an oriented $d$-dimensional manifold $M$ whose boundary is $\Sigma_1^* \cup \Sigma_2$, that is, the orientation of $M$ matches with that of the out-boundary $\Sigma_2$ and is the opposite of the orientation of the in-boundary $\Sigma_1$. We will write $\Sigma^*$ for the $(d-1)$-dimensional manifold $\Sigma$ with orientation reversed. For a cobordism $M$ from $\Sigma_1$ to $\Sigma_2$, we will write $M^*$ for the manifold $M$ with its orientation reversed. By the involutory axiom and the multiplicative axiom, the element $Z(M)$ can be regarded as a homomorphism from $Z(\Sigma_1)$ to $Z(\Sigma_2)$. Thus, a cobordism $M$ from $\Sigma_1$ to $\Sigma_2$ can be viewed as a linear map from $Z(\Sigma_1)$ to $Z(\Sigma_2)$. For more discussions on the cobordism category $dCob$ and its connection to TQFTs, see (\cite{kock2003}).

In dimension two, there is a fascinating result that establishes an equivalence of categories between two-dimensional TQFTs over a field $F$ and Frobenius algebras over $F$. Thus, for each 2D TQFT there is a corresponding Frobenius algebra, and conversely each Frobenius algebra gives rise to a 2D TQFT. This gives us a third way to understand TQFTs, and in particular, it allows us to use algebraic tools to study something geometric in nature. For more information, Kock offers a detailed examination of 2D TQFTs and Frobenius algebras in (\cite{kock2003}).

In the case of TQFTs over the complex numbers, when the vector spaces $Z(\Sigma)$ are equipped with nondegenerate Hermitian forms, Atiyah proposes an extra axiom (\cref{herm}) which relates a cobordism $M$ with $M^*$, namely, that the linear map $Z(M)$ is the adjoint of $Z(M^*)$. We call a TQFT satisfying this axiom a Hermitian TQFT and a Hermitian TQFT with positive-definite Hermitian forms a unitary TQFT (\Cref{herm_tqft_def}). 

The main focus of this paper is to relate the Hermitian and unitary conditions in each of the three models for TQFTs. Starting from Atiyah's axioms, we will derive the extra algebraic structures that the Hermitian and unitary conditions impose on the corresponding Frobenius algebras, as well as clarify various arguments in the literature.

\subsection*{Acknowledgements}
This paper is the result of a direct funding project in the Undergraduate Research Opportunities Program at MIT. The author would like to thank Professor Haynes Miller for his mentorship in this project. He would like to thank Professor Stephen Sawin and the referee for providing helpful comments. He would also like to thank the J-WEL Grant in Higher Education Innovation, ``Educational Innovation in Palestine,'' which provided the initial impetus for this research. Finally, the author would like to thank the MIT UROP office for providing this research opportunity.

\section{Atiyah's Axioms}\label{sec_axioms}
The axiomatic formulation of topological quantum field theories where first proposed by Michael Atiyah in (\cite{atiyah1989}). In this paper, Atiyah defines a TQFT in dimension $d$ over a field $F$ as a rule $Z$ which:
\begin{enumerate}[label=(\Alph*)]
    \item associates to each compact oriented smooth $(d-1)$-dimensional manifold $\Sigma$ a finite dimensional $F$-vector space $Z(\Sigma)$, and
    \item associates to each compact oriented smooth $d$-dimensional manifold $M$ with boundary $\partial M$ an element $Z(M) \in Z(\partial M)$.
\end{enumerate}
The rule $Z$ is subject to the following TQFT axioms:
\begin{enumerate}
    \item $Z$ is \textit{functorial} with respect to orientation preserving diffeomorphisms of $\Sigma$ and $M$, 
    \item $Z$ is \textit{involutory}, that is, there is a natural isomorphism $Z(\Sigma^*) \cong Z(\Sigma)^*$ where $\Sigma^*$ is $\Sigma$ with orientation reversed and $Z(\Sigma)^*$ denotes the dual space,
    \item $Z$ is \textit{multiplicative}, 
    \item for $\varnothing$ the empty $(d-1)$-dimensional manifold, $Z(\varnothing) = F$; for $\varnothing$ the empty $d$-dimensional manifold, $Z(\varnothing) = 1$; for a cylinder $\Sigma \times I$, $Z(\Sigma \times I) \in Z(\Sigma)^* \otimes Z(\Sigma) \cong \operatorname{End}(Z(\Sigma))$ is the identity map.
\end{enumerate}
Atiyah offers the following elaboration on his axioms in (\cite{atiyah1989}).

The functorial axiom means that an orientation preserving diffeomorphism $f: \Sigma \to \Sigma'$ induces an isomophism $Z(f): Z(\Sigma) \to Z(\Sigma')$ and for some other diffeomorphism $g: \Sigma' \to \Sigma''$, $Z(g \circ f) = Z(g) \circ Z(f)$. If $f$ extends to an orientation preserving diffeomorphism $M \to M'$ with $\partial M = \Sigma$, $\partial M' = \Sigma'$, then $Z(M) \mapsto Z(M')$ under $Z(f)$.

The multiplicative axiom asserts that for disjoint unions, there is a natural isomorphism 
\[
    Z(\Sigma_1 \cup \Sigma_2) \cong Z(\Sigma_1) \otimes Z(\Sigma_2).
\]
Furthermore, if $M_1$, $M_2$ have respective boundaries $\Sigma_1 \cup \Sigma_3$, $\Sigma_2 \cup \Sigma_3^*$, and we glue them together at the $\Sigma_3$ boundary to get $M = M_1 \cup_{\Sigma_3} M_2$, then 
\[
    Z(M) = \langle Z(M_1), Z(M_2) \rangle,
\]
where $\langle \cdot, \cdot \rangle$ denotes the natural pairing $Z(\Sigma_1) \otimes Z(\Sigma_3) \otimes Z(\Sigma_3)^* \otimes Z(\Sigma_2) \to Z(\Sigma_1) \otimes Z(\Sigma_2)$. In particular, when $\Sigma_3 = \varnothing$ so that $M$ is the disjoint union of $M_1$ and $M_2$, $Z(M) = Z(M_1) \otimes Z(M_2)$. 

Viewing the multiplicative axiom from the cobordism category perspective, we get an equivalent and helpful reformulation. If a manifold $M$ has boundary $\Sigma_1^* \cup \Sigma_2$, then we can view it as a cobordism from $\Sigma_1$ to $\Sigma_2$, with either boundary component possibly empty. Then the first part of the axiom states that $M$ is a linear transformation
\[
    Z(M) \in Z(\Sigma_1)^* \otimes Z(\Sigma_2) \cong \operatorname{Hom}(Z(\Sigma_1), Z(\Sigma_2)).
\]
The second part then asserts that when we glue two cobordisms together we get the composition of their respective linear transformations. 

\section{The Hermitian Axiom}
In (\cite{atiyah1989}), after formulating the TQFT axioms, Atiyah notes that they have yet to give a relation between the values of the TQFT on $M$ and $M^*$. Thus, he proposes an additional axiom (\cref{herm}) that provides such a relation, in the case the ground field is the field $\mathbb{C}$ of complex numbers. To state the Hermitian axiom, we assume that for each $(d-1)$-dimensional manifold $\Sigma$, the vector space $Z(\Sigma)$ comes equipped with a nondegenerate Hermitian structure. That is, there is a nondegenerate Hermitian form $h_{\Sigma}: Z(\Sigma) \times Z(\Sigma) \to \mathbb{C}$, linear in the first argument and conjugate-linear in the second. The Hermitian form $h_{\Sigma}$ on $Z(\Sigma)$ induces a conjugate linear isomorphism $H: Z(\Sigma) \to Z(\Sigma)^*$ through $H(v) = h_{\Sigma}(\cdot, v)$. Then for a manifold $M$ with boundary $\partial M$, the Hermitian axiom states that $Z(M^*)$ is identified with $H(Z(M))$ under the isomorphism from $Z(\partial M^*)$ to $Z(\partial M)^*$ provided by the involutory axiom:
\begin{equation} \label{herm}
    Z(M^*) = h_{\partial M} (\cdot, Z(M)).
\end{equation}

These Hermitian structures on the vector spaces associated to $(d-1)$-dimensional manifolds are required to satisfy the following natural properties:
\begin{enumerate}\label{herm_properties}
    \item If $Z(\Sigma)$ has Hermitian form $h$, then $Z(\Sigma)^*$ has the Hermitian form 
    \[
        h^*(h(\cdot, v), h(\cdot, w)) = h(w,v)
    \]
    for all $h(\cdot, v), h(\cdot, w) \in Z(\Sigma)^*$;
    \item If $Z(\Sigma_1)$, $Z(\Sigma_2)$ have Hermitian forms $h_1$, $h_2$, respectively, then $Z(\Sigma_1) \otimes Z(\Sigma_2)$ has the Hermitian form 
    \[
        h_{1\otimes 2}(v_1 \otimes v_2, w_1 \otimes w_2) = h_1(v_1, w_1) h_2(v_2, w_2)
    \]
    for pure tensors $v_1 \otimes v_2, w_1 \otimes w_2 \in Z(\Sigma_1) \otimes Z(\Sigma_2)$.
\end{enumerate}

The cobordism model gives a convenient reformulation of the Hermitian axiom.
\begin{theorem}\label{herm_adj}
    Suppose $M$ is a cobordism from $\Sigma_1$ to $\Sigma_2$. Let $V = Z(\Sigma_1)$, $W = Z(\Sigma_2)$ so that we can view $Z(M)$ and $Z(M^*)$ as linear transformations $Z(M): V \to W$ and $Z(M^*): W \to V$. If $h_{V}$, $h_{W}$ are the Hermitian forms on $V$ and $W$, respectively, then $Z(M^*)$ is the adjoint of $Z(M)$. That is,
    \[
        h_{W}(Z(M) (v), w) = h_{V}(v, Z(M^*) (w))
    \]
    for all $v \in V$, $w \in W$.
\end{theorem}
\begin{proof}
    Without loss of generality, suppose $Z(M) = h_V(\cdot, x) \otimes y$ is a pure tensor in $V^* \otimes W$. Then by the Hermitian axiom (\cref{herm}), $Z(M^*) = x \otimes h_W(\cdot, y)$. Then for any $v \in V$, $w \in W$,
    \begin{align*}
         h_{W}(Z(M) (v), w) 
         &= h_{W}(h_V(v, x) \cdot y, w)\\
         &= h_{V}(v, x) \cdot h_{W}(y, w)\\
         &= h_{V}(v, x \cdot h_{W}(w, y))\\
         &= h_{V}(v, Z(M^*) (w)).
    \end{align*}
\end{proof}

We use the following terminology throughout the rest of the paper. 
\begin{definition}\label{herm_tqft_def}
    Let $Z$ be a TQFT over the complex numbers. We say $Z$ is \textit{Hermitian} if $Z$ satisfies the Hermitian axiom (\cref{herm}). 
    
    If the Hermitian forms $h_{\Sigma}$ are all positive-definite, following the convention in (\cite{durhuus1994}), we say $Z$ is \textit{unitary}. 
\end{definition}
These definitions make sense in all finite dimensions, but in the remainder of the paper, we will focus on two-dimensional Hermitian and unitary TQFTs. 

\section{The Algebraic Structure of Hermitian TQFTs} \label{sec_hermitian}
In dimension $d = 2$, the Hermitian axiom (\cref{herm}) has an algebraic interpretation through the conditions it imposes on the commutative Frobenius algebra associated to a TQFT. 

\begin{definition}[Kock (\cite{kock2003})]\label{frob_def}
    A \textit{Frobenius algebra} $A$ is a finite-dimensional unital commutative $F$-algebra equipped with a linear functional $\epsilon: A \to F$ whose nullspace contains no nontrivial ideals. The functional $\epsilon \in A^*$ is called a \textit{Frobenius form}. 
\end{definition}

The following proposition relates Frobenius algebras with TQFTs. 
\begin{proposition}[Kock (\cite{kock2003})]\label{frob_prop}
    A Frobenius algebra $A$ can be seen as its underlying vector space $V$ equipped with a multiplication $\mu: V \otimes V \to V$ with unit $\eta: F \to V$ and a Frobenius form $\epsilon \in V^*$. Then there exists a unique comultiplication $\delta: V \to V \otimes V$ with counit $\epsilon$ such that the Frobenius relation holds:
    \begin{equation}\label{eq_frob}
        (\operatorname{id} \otimes \mu)(\delta \otimes \operatorname{id}) = \delta \mu = (\mu \otimes \operatorname{id})(\operatorname{id} \otimes \delta).
    \end{equation}
\end{proposition}
In this paper, we will fix the notation $A = (V, \mu, \eta, \delta, \epsilon)$ for a Frobenius algebra $A$ with underlying vector space $V$, multiplication $\mu$, unit $\eta$, comultiplication $\delta$, and counit $\epsilon$. Following the convention in (\cite{kock2003}), we also write $\beta = \epsilon \mu: V \otimes V \to F$ for the pairing and $\gamma = \delta \eta: F \to V \otimes V$ for the copairing.

Since the pairing is a nondegenerate symmetric bilinear form on the vector space $V = Z(S^1)$, it is natural to suspect that the presence of both a Hermitian form and a bilinear form, in addition to some compatibility condition provided by the Hermitian axiom, induces a natural real structure on $V$. This is indeed the case, as we will illustrate in this section. 

\begin{definition}\label{star_frob_def}
    A complex Frobenius algebra $A$ is a \textit{$^*$-Frobenius algebra} if $A$ is also a $^*$-algebra. That is, if $A$ is endowed with a conjugate linear involution $J: A \to A$ such that $J(\mu(x, y)) = \mu(J(y), J(x))$, where $\mu$ is the multiplication on $A$.
\end{definition}

Let $Z$ be a two-dimensional Hermitian TQFT and let $h$ be the Hermitian form on $V = Z(S^1)$. Suppose $A = (V, \mu, \eta, \delta, \epsilon)$ is the complex commutative Frobenius algebra associated to $Z$ and suppose $\beta = \epsilon \mu$ is its pairing. We will also write $\beta$ for the nondegenerate symmetric bilinear form $\beta: V \times V \to \mathbb{C}$ given by $\beta(v, w) = \beta(v \otimes w)$.

\begin{theorem} \label{Frob_with_involution}
    A two-dimensional Hermitian TQFT $Z$ corresponds to a complex commutative $^*$-Frobenius algebra $A$ with involution $J$ given by $h(x, J(y)) = \beta(x, y)$ for all $x, y \in V$. Conversely, a $^*$-Frobenius algebra gives rise to a Hermitian TQFT in which the Hermitian structure is determined by the same formula. 
\end{theorem}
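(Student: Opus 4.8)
The plan is to verify the theorem in both directions by unwinding the definitions of the Frobenius structure maps in terms of the Hermitian form, using \Cref{herm_adj} as the bridge between the geometric Hermitian axiom and the algebraic adjoint relations. The central object is the conjugate-linear map $J : A \to A$ defined implicitly by $h(x, J(y)) = \beta(x,y)$; since $h$ is a nondegenerate sesquilinear form, $J(y)$ is well-defined for each $y$, and $J$ is conjugate-linear because $\beta$ is bilinear while $h$ is conjugate-linear in the second slot.

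First I would check that $J$ is an involution. The multiplication, unit, counit, and pairing of the Frobenius algebra $A$ arise as the TQFT values of the pair-of-pants, the cup, the cap, and the pairing cylinder respectively. The key input is that $\beta$, the nondegenerate symmetric bilinear form, is itself the value of a cobordism (two incoming circles, no outgoing); its reversal $M^*$ is the copairing $\gamma$. I would use \Cref{herm_adj} applied to these cobordisms: the Hermitian axiom says $Z(M^*)$ is the adjoint of $Z(M)$ with respect to the $h$'s on the boundary circles. Concretely, for the pair-of-pants $P : V \otimes V \to V$ realizing $\mu$, its reversal $P^* : V \to V \otimes V$ realizes the comultiplication $\delta$, and \Cref{herm_adj} gives $h_{V}(\mu(a \otimes b), c) = h_{V \otimes V}(a \otimes b, \delta(c))$, where $h_{V\otimes V}$ is the tensor-product Hermitian form from property (2) on page~\pageref{herm_properties}. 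Translating these adjoint relations through the identity $h(x, J(y)) = \beta(x,y)$ and the standard Frobenius identities relating $\delta$, $\mu$, $\beta$, $\gamma$ (e.g. $\beta(\mu(a\otimes b), c) = \beta(a, \mu(b \otimes c))$ and the fact that $\delta = (\mathrm{id}\otimes\mu)\circ(\gamma\otimes\mathrm{id})$-type relations, all of which hold in any commutative Frobenius algebra and hence are available "for free"), I expect to extract both $J^2 = \mathrm{id}$ and the antimultiplicativity $J(\mu(x,y)) = \mu(J(y), J(x)) = \mu(J(x), J(y))$ (the last equality by commutativity). I would also record that $J(\eta) = \eta$, which follows by pairing against an arbitrary element and using that the cup's reversal is the cap together with the unit/counit relation $\epsilon(\mu(\eta \otimes x)) = \epsilon(x)$ and $\beta(\eta, x) = \epsilon(x)$.

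For the converse, given a complex commutative $^*$-Frobenius algebra $(A, \mu, \eta, \epsilon, J)$, I would define $h(x,y) := \beta(x, J(y))$ — forced by the displayed formula once we check $J$ is an involution, since then $h(x, J(y)) = \beta(x, J^2(y))$ wants to equal $\beta(x,y)$, so actually the correct reading is $h(x,y) = \beta(x, J(y))$ with $h(x, J(y)) = \beta(x, J^2 y) = \beta(x,y)$. This $h$ is sesquilinear (linear in $x$, conjugate-linear in $y$ since $J$ is conjugate-linear), and nondegenerate because both $\beta$ and $J$ are. One must check $h$ is Hermitian, i.e. $h(y,x) = \overline{h(x,y)}$; this should reduce to a compatibility between $J$, complex conjugation, and the symmetry of $\beta$, and I anticipate it follows from antimultiplicativity plus the fact that $\epsilon \circ J$ relates to conjugation of $\epsilon$ — this is a point I would need to pin down carefully, possibly by imposing or deriving $\epsilon(J(x)) = \overline{\epsilon(x)}$. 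Then I would propagate $h$ to all $Z(\Sigma)$ for 1-manifolds $\Sigma = \bigsqcup S^1$ using properties (1) and (2), define $h$ on $Z(\varnothing) = \mathbb{C}$ as the standard form, and verify the Hermitian axiom \labelcref{herm} for a generating set of cobordisms (cup, cap, pair-of-pants, copair-of-pants, cylinder, transposition), again via \Cref{herm_adj}; since \labelcref{herm} is compatible with gluing (the adjoint of a composite is the composite of adjoints in reverse) and disjoint union, checking it on generators of $2\mathrm{Cob}$ suffices.

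The main obstacle I expect is bookkeeping the conjugate-linearity consistently: the Hermitian form is conjugate-linear in the second variable, $J$ is conjugate-linear, and $\beta$ is bilinear, so each translation between $h$-statements and $\beta$-statements flips linearity in one slot, and it is easy to introduce a spurious complex conjugate or to get the adjoint on the wrong side. In particular, proving that the $h$ built in the converse direction is genuinely Hermitian (symmetric up to conjugation) — rather than merely sesquilinear — is the step most likely to require an extra hypothesis or a genuine argument rather than a formal manipulation, and I would isolate exactly which property of the $^*$-structure (compatibility of $J$ with $\epsilon$, equivalently with the Frobenius form) makes it work. Once that is settled, functoriality of the whole construction — that the TQFT-to-algebra and algebra-to-TQFT assignments are mutually inverse at the level of these extra structures — follows from the already-established equivalence between 2D TQFTs and commutative Frobenius algebras, so no new naturality argument is needed beyond checking that $J$ and $h$ determine each other, which is immediate from the defining formula.
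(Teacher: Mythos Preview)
Your plan is sound and parallels the paper's proof in its skeleton: both define $J$ via $h(x,J(y))=\beta(x,y)$, both derive $J^{2}=\mathrm{id}$ from \Cref{herm_adj} applied to the pairing/copairing cobordisms, and both use the $\eta$/$\epsilon$ adjointness for $J(\eta)=\eta$. The paper makes the involution step explicit by writing $B:v\mapsto\beta(\cdot,v)$, using that $\gamma$ is adjoint to $\beta$ to obtain $h^{*}(Bv,Bw)=h(v,w)$, and then unwinding this to $h(v,w)=h(v,J^{2}w)$.

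The one substantive divergence is the argument for $J(\mu(x,y))=\mu(J(x),J(y))$. The paper does \emph{not} apply \Cref{herm_adj} to the pair-of-pants; instead it introduces the real subspace $V_{0}=\{v:J(v)=v\}$, shows $\eta\in V_{0}$ and that $\epsilon$ and $\beta$ restrict to real forms there, and then argues that $\mu$ preserves $V_{0}$, after which antimultiplicativity follows by extending $\mathbb{C}$-linearly in each variable. Your proposed route---pulling the adjoint identity $h(\mu(a\otimes b),c)=h_{V\otimes V}(a\otimes b,\delta(c))$ back through $h(\cdot,J\cdot)=\beta$ and the Frobenius identity $\beta(z,\mu(x,y))=\sum\beta(z_{(1)},x)\beta(z_{(2)},y)$---also works and is arguably more direct; the paper's detour through $V_{0}$ pays off because that real structure is exactly what is reused in \Cref{structure_thm}.

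Your caution about the converse is well placed. Hermiticity of $h(x,y):=\beta(x,Jy)$ amounts to $\beta(Jx,Jy)=\overline{\beta(x,y)}$, equivalently $\epsilon\circ J=\overline{\epsilon}$, and this is not a formal consequence of \Cref{star_frob_def} as written. The paper's converse is brief on this point; your plan to isolate this compatibility and then verify \labelcref{herm} on generating cobordisms is the honest way to complete the argument.
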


\begin{proof}
    Let $V$ denote the complex vector space $Z(S^1)$, which is also the underlying vector space of $A$. Let $B$ denote $\beta$ viewed as the isomorphism $B: V \to V^{*}$, $v \mapsto \beta(\cdot, v)$. Similarly, let $C$ denote $\gamma$ viewed as an isomorphism $V^{*} \to V$. Then by \Cref{herm_adj}, for all $v \in V$, $\alpha \in V^{*}$, 
    
    \begin{equation}\label{beta_gamma}
        h^{*}(B(v), \alpha) = h(v, C(\alpha)),
    \end{equation}
    where $h^{*}$ is the natural Hermitian form on $V^{*} \cong \overline{V}$ given by 
    \[
        h^{*}(h(\cdot, v), h(\cdot, w)) = \overline{h}(\overline{v}, \overline{w}) = h(w, v).
    \]
    Since gluing the cobordisms $\beta$ and $\gamma$ together on one of the boundary circles gives the cylinder, by the multiplicative axiom, $B \circ C = \operatorname{id}_{V^*}$. Thus, if $w = C(\alpha) \in V$, we can rewrite \cref{beta_gamma} as 
    \begin{equation}\label{beta_unitary}
        h^{*}(B(v), B(w)) = h(v, w).
    \end{equation}
    
    Let $H$ denote the conjugate linear isomorphism $V \to V^{*}$, $v \mapsto h(\cdot, v)$. Let $J = H^{-1} \circ B$ be the conjugate linear automorphism on $V$ such that $h(v, J(w)) = \beta(v, w)$ for all $v, w \in V$. Then \cref{beta_unitary} implies 
    \begin{align*}
        h(v, w) &= h^{*}(B(v), B(w))\\
        &= h^{*}(h(\cdot, J(v)), h(\cdot, J(w)))\\
        &= h(J(w), J(v))\\
        &= \beta(J(w), v)\\
        &= \beta(v, J(w))\\
        &= h(v, J^2(w)).
    \end{align*}
    The nondegeneracy of $h$ implies $J$ is an involution. 
    
    As a complex vector space,
    \[
        V \cong V_0 \otimes_{\mathbb{R}} \mathbb{C},
    \]
    where $V_0 = \{v \in V | J(v) = v\}$. The restriction of $h$ and $\beta$ to $V_0$ coincide, and for any $v, w \in V_0$, 
    \[
        h(v, w) = \beta(v, w) = \beta(w, v) = h(w, v),
    \]
    so this restriction is a real nondegenerate symmetric bilinear form.
    
    It remains to check the condition $J(\mu(x, y)) = \mu(J(y), J(x))$. By the Frobenius relation (\cref{eq_frob}), 
    \[
        \delta = \delta \mu (\eta \otimes \operatorname{id}) = (\operatorname{id} \otimes \mu)(\gamma \otimes \operatorname{id}).
    \]
    Also, by the multiplicative axiom, 
    \[
        \operatorname{id} = (\beta \otimes \operatorname{id})(\operatorname{id} \otimes \gamma).
    \]
    Suppose that $\gamma = \sum a_i \otimes b_i \in V \otimes V$. Applying the Hermitian axiom (\cref{herm}) to $\mu$ and $\delta$, for any $x, y, z \in V$,
    \begin{align*}
        h(\mu(x, y), z) 
        &= h_{V \otimes V}(x \otimes y, \delta(z))\\
        &= h_{V \otimes V}(x \otimes y, \sum a_i \otimes \mu(b_i, z))\\
        &= \sum h(x, a_i) h(y, \mu(b_i, z))\\
        &= \sum \overline{\beta(J(x), a_i)} h(y, \mu(b_i, z)) \\
        &= h(y, \mu(\sum \beta(J(x), a_i)b_i, z))\\
        &= h(y, \mu(J(x), z)). 
    \end{align*}
    This implies 
    \[
    \beta(J(\mu(x, y)), z) = \beta(J(y), \mu(J(x), z)) = \beta(\mu(J(y), J(x)), z).
    \] 
    By the nondegeneracy of $\beta$, $J(\mu(x, y)) = \mu(J(y), J(x))$. Therefore, $A$ is a commutative $^*$-Frobenius algebra with involution $J$. 
    
Conversely, suppose $A$ is a commutative $^*$-Frobenius algebra with involution $J$. Then $h(v, w) := \beta(v, J(w))$ gives a Hermitian structure on the underlying vector space $V$. Reversing the above computation shows that $h$ satisfies $h(\mu(x, y), z) = h_{V \otimes V}(x \otimes y, \delta(z))$. Also, $h(v, \eta) = \beta(v, \eta) = \epsilon(v)$. Thus, the Hermitian axiom holds for the two pairs of generators $(\mu, \delta)$ and $(\epsilon, \eta)$. Extending by the multiplicative axiom, the Hermitian axiom holds for all cobordisms. Therefore, the $^*$-Frobenius algebra structure supplements $Z$ to become a Hermitian TQFT.
\end{proof}
    
    The proof of \Cref{Frob_with_involution} suggests a simple structure for the commutative $^*$-Frobenius algebra associated to a two-dimensional Hermitian TQFT, which Sawin proves in (\cite{sawin1995}):
    
\begin{theorem}[Sawin (\cite{sawin1995})]\label{structure_thm}
    Let $A$ be the commutative $^*$-Frobenius algebra associated to a Hermitian TQFT $Z$, with involution $J$ as in \Cref{Frob_with_involution}. Then as a complex Frobenius algebra, 
    \[
        A \cong A_0 \otimes_{\mathbb{R}} \mathbb{C},
    \]
    where $A_0$ is the real commutative Frobenius algebra defined on the vector space $\{x \in V| J(x) = x\}$ with $\mu_{A_0}$ and $\epsilon_{A_0}$ taken to be the restrictions of $\mu_{A}$ and $\epsilon_{A}$ on $A_0$. In other words, $A$ is the complexification of $A_0$.
\end{theorem}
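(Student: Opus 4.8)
The plan is to leverage the structural facts already extracted in the proof of \Cref{Frob_with_involution} and assemble them into the claimed complexification; essentially no new geometric input is needed. Recall from that proof that $V_0 = \{x \in A \mid J(x) = x\}$ satisfies: $V \cong V_0 \otimes_{\mathbb{R}} \mathbb{C}$ as complex vector spaces, $V_0$ is closed under the multiplication $\mu = \mu_A$, the unit $\eta$ lies in $V_0$, the counit $\epsilon = \epsilon_A$ restricts to a real-valued form on $V_0$, and the bilinear pairing $\beta$ restricts to a real nondegenerate symmetric bilinear form on $V_0$ that coincides there with $h$.

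First I would define $A_0$ to be the real vector space $V_0$ equipped with $\mu_{A_0} = \mu|_{V_0 \times V_0}$, unit $\eta$, and $\epsilon_{A_0} = \epsilon|_{V_0}$, and verify that this is a real commutative Frobenius algebra. Associativity, commutativity, and the unit axiom for $A_0$ are inherited verbatim from $A$ since $V_0$ is a subalgebra containing $\eta$; that $\epsilon_{A_0}$ is real-valued is one of the recalled facts; and the Frobenius condition is nondegeneracy of $\beta_{A_0} = \epsilon_{A_0} \circ \mu_{A_0}$, which is the restriction of $\beta$ to $V_0$ and was shown to be nondegenerate over $\mathbb{R}$.

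Next I would exhibit the isomorphism. The natural $\mathbb{C}$-linear map $\Phi \colon A_0 \otimes_{\mathbb{R}} \mathbb{C} \to A$, $x \otimes z \mapsto z x$, is a bijection because every $v \in A$ decomposes uniquely as $v = \frac{1}{2}(v + J(v)) + \frac{1}{2}(v - J(v))$ with the first summand in $V_0$ and the second in $iV_0$ (using that $J$ is a conjugate-linear involution), so $A = V_0 \oplus i V_0$ as real vector spaces. It remains to check $\Phi$ is a morphism of complex Frobenius algebras: it sends $1 \otimes 1$ to $\eta$; it intertwines multiplications because $\mu_A$ is $\mathbb{C}$-bilinear and restricts to $\mu_{A_0}$, giving $\Phi\big((x\otimes z)(y \otimes w)\big) = z w\, \mu_A(x,y) = \mu_A(zx, wy) = \mu_A\big(\Phi(x\otimes z), \Phi(y\otimes w)\big)$; and it intertwines counits because $\epsilon_A$ is $\mathbb{C}$-linear and restricts to $\epsilon_{A_0}$, giving $\epsilon_A(\Phi(x \otimes z)) = z\, \epsilon_{A_0}(x)$. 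The remaining Frobenius data (copairing, comultiplication) are determined by $\mu$, $\eta$, $\epsilon$ through the nondegenerate pairing, so no separate check is required, and $\Phi$ is an isomorphism $A \cong A_0 \otimes_{\mathbb{R}} \mathbb{C}$ of complex Frobenius algebras.

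I do not expect a serious obstacle: all of the analytic and geometric content lives in \Cref{Frob_with_involution}, and what remains is bookkeeping. The point that requires a little care is confirming that $A_0$ really is a \emph{Frobenius} algebra and not merely an algebra — i.e., that the restricted pairing stays nondegenerate over $\mathbb{R}$, which is why the decomposition $V \cong V_0 \otimes_{\mathbb{R}} \mathbb{C}$ together with the coincidence of $h$ and $\beta$ on $V_0$ is doing real work — and tracking the counit (not just the product) through $\Phi$ so that the conclusion is an isomorphism of Frobenius algebras rather than of algebras alone.
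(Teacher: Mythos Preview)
Your proposal is correct and follows essentially the same approach as the paper: both arguments simply harvest the facts established in the proof of \Cref{Frob_with_involution} (that $V \cong V_0 \otimes_{\mathbb{R}} \mathbb{C}$, that $\mu$ and $\epsilon$ restrict to real data on $V_0$, and that the restricted pairing is nondegenerate) and assemble them into the statement that $A_0$ is a real Frobenius subalgebra whose complexification is $A$. Your write-up is more explicit than the paper's three-line proof---you spell out the map $\Phi$ and verify it respects unit, product, and counit---but the underlying strategy is identical.
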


\begin{proof}
    Let $V$ be the underlying complex vector space of $A$ and $V_0$ be the real subspace fixed by $J$. From the proof of \cref{Frob_with_involution}, $V \cong V_0 \otimes_{\mathbb{R}} \mathbb{C}$. In addition, the restrictions of $\epsilon$ and $\mu$ to $V_0$ are real. Thus, $A_0 = (V_0, \epsilon|_{V_0}, \mu|_{V_0})$ is a real commutative Frobenius subalgebra of $A$. It follows that $A$ is the complexification of $A_0$. 
\end{proof}

\begin{remark}
    \Cref{structure_thm} shows that a Hermitian TQFT is essentially real, since $A_0$ is the (Hermitian) TQFT where $Z_0(S^1) = V_0$ and the cobordisms $Z_0(M)$ are taken to be the restrictions of $Z(M)$ to the ``real'' parts of the vector spaces. Equivalently, starting from a real TQFT $Z_0$, we can extend it to a Hermitian TQFT just by taking its complexification. 
\end{remark}

\section{The Algebraic Structure of Unitary TQFTs}\label{sec_unitary}
In the case when $Z$ is unitary (and two-dimensional), further structure exists on the corresponding $^*$-Frobenius algebra. Durhuus, Jonsson, and Sawin have studied unitary TQFTs algebraically in (\cite{durhuus1994,sawin1995}). In this section, we will prove natural extensions of the results in \Cref{sec_hermitian}, and provide a clarification and unification of the previous authors' results. 

\begin{definition}
    Suppose $A$ is a one dimensional real Frobenius algebra. By the nondegeneracy of the Frobenius form, $\epsilon(\eta) \neq 0$. We say $A$ is \textit{positive} if $\epsilon(\eta) > 0$ and \textit{negative} if $\epsilon(\eta) < 0$.
\end{definition}

The following theorem (\Cref{structure_thm_unitary}) is a natural extension of \Cref{structure_thm}. In (\cite{sawin1995}), Sawin presents it as a corollary to a result we present later (\Cref{c_star_Frob}).

\begin{theorem} \label{structure_thm_unitary}
    Let $A \cong A_{0} \otimes_{\mathbb{R}} \mathbb{C}$ be the Frobenius algebra associated to a unitary TQFT $Z$. If $\operatorname{dim}_{\mathbb{R}}{A_0} = n$, then $A_{0}$ is the product of $n$ one-dimensional positive Frobenius algebras. That is, $A_0 \cong \mathbb{R}^n$ where $\mathbb{R}^n$ is equipped with coordinate wise multiplication and $\epsilon(e_i) > 0$ for each idempotent $e_i$ spanning a copy of $\mathbb{R}$ in the product. 
\end{theorem}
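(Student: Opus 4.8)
The plan is to work inside the real Frobenius algebra $A_0 = \{x \in A \mid J(x) = x\}$ supplied by \Cref{structure_thm} and to deduce everything from one positivity inequality. By \Cref{Frob_with_involution} and the computation in its proof, the restrictions of $h$ and $\beta$ to $A_0$ coincide; since $Z$ is unitary, $h$ is positive-definite, so $\beta|_{A_0}$ is a positive-definite real symmetric bilinear form. Using the Frobenius identity $\beta(x,y) = \epsilon(\mu(x,y))$ valid on $A_0$, this says exactly that $\epsilon(x^2) > 0$ for every $0 \neq x \in A_0$ (writing $x^2$ for $\mu(x,x)$). The remainder is pure algebra powered by this inequality.

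First I would show $A_0$ is semisimple. Being a finite-dimensional commutative $\mathbb{R}$-algebra, $A_0$ has nilpotent Jacobson radical; if this radical were nonzero, pick $0 \neq x$ in it and let $k \geq 2$ be minimal with $x^k = 0$. Then $y := x^{k-1} \neq 0$, yet $\epsilon(y^2) = \epsilon(x^{2(k-1)}) = 0$ because $2(k-1) \geq k$, contradicting the inequality above. So $A_0$ has no nonzero nilpotents, hence by the structure theorem for commutative semisimple algebras over a field, $A_0 \cong K_1 \times \cdots \times K_m$ where each $K_j$ is a finite extension of $\mathbb{R}$, so $K_j \cong \mathbb{R}$ or $K_j \cong \mathbb{C}$.

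Next I would rule out the complex factors. Let $e_j \in A_0$ be the identity of the factor $K_j$; these are orthogonal idempotents, so for $x \in K_j$, $y \in K_k$ with $j \neq k$ one has $\beta(x,y) = \epsilon(e_j e_k x y) = 0$, i.e. $\beta$ is the orthogonal direct sum of its restrictions $\beta|_{K_j}$, each of which is again positive-definite. If some $K_j \cong \mathbb{C}$, choose $u \in K_j$ with $u^2 = -e_j$; then in the basis $\{e_j, u\}$ the Gram matrix of $\beta|_{K_j}$ is $\begin{pmatrix} \epsilon(e_j) & \epsilon(u) \\ \epsilon(u) & -\epsilon(e_j) \end{pmatrix}$, whose determinant $-\epsilon(e_j)^2 - \epsilon(u)^2$ is $\leq 0$ — impossible for a positive-definite form. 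Hence every $K_j \cong \mathbb{R}$, so $A_0 \cong \mathbb{R}^n$ with $n = \dim_{\mathbb{R}} A_0$, coordinatewise multiplication, and $e_1, \dots, e_n$ the standard idempotents. The product decomposition is then the claimed one, and for each $e_i$ we get $\epsilon(e_i) = \epsilon(e_i^2) = \beta(e_i, e_i) > 0$, so each one-dimensional factor is positive.

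I do not anticipate a conceptual obstacle; the work is in the bookkeeping. The point to get right is that the Frobenius structure genuinely descends to the idempotent pieces and that the bilinear form decomposes orthogonally along them — this is precisely where commutativity of $A$ and the identity $\beta = \epsilon \circ \mu$ are used — together with translating ``positive-definite'' into the correct sign conditions ($\epsilon(x^2) > 0$, and a positive Gram determinant on each factor) that drive the two steps.
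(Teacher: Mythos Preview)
Your proof is correct and takes a genuinely different route from the paper's. The paper works through the left regular representation $\varphi: A_0 \to \operatorname{End}(A_0)$, $x \mapsto \mu(x,\cdot)$: the Frobenius identity makes each $\varphi(x)$ self-adjoint with respect to the positive-definite form $\beta|_{A_0}$, so the real spectral theorem simultaneously diagonalizes the commuting family $\{\varphi(x)\}$; faithfulness (via $\varphi(x)(\eta)=x$) and a dimension count then force $A_0$ onto the algebra of real diagonal $n\times n$ matrices, after which $\epsilon(e_i)=\beta(e_i,e_i)>0$ is immediate. Your argument instead distills everything to the single inequality $\epsilon(x^2)>0$ and spends it twice: once to exclude nilpotents (giving semisimplicity, hence a product of copies of $\mathbb{R}$ and $\mathbb{C}$ by Wedderburn over $\mathbb{R}$), and once more, via the Gram-determinant sign, to exclude the $\mathbb{C}$ factors. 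The paper's approach has the practical advantage of handing you the diagonal-matrix model of $A$ that the subsequent corollaries (the handle-operator spectrum in \Cref{Durhuus_thm} and the $C^*$-norm in \Cref{c_star_Frob}) use directly; your approach is more transparent about exactly where and how positivity is consumed, and sidesteps the spectral theorem entirely in favor of elementary structure theory.
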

\begin{proof}
    Since it suffices to deal with $A_0$, in this proof we will use $\beta$, $\mu$, etc., to denote the restriction of those on $A_0 \subset A$. From the proof of \Cref{Frob_with_involution}, $\beta = h|_{V_0}$. Since $Z$ is unitary, $\beta$ is a positive-definite symmetric bilinear form on $V_0$. Now consider the representation of $A_0$ through its action on $V_0$ by left multiplication:
    \[
        \varphi: A_0 \to \operatorname{End}(A_0), \quad \varphi(x)(y) = \mu(x, y).
    \]
    For all $x, y \in V_0$, $\varphi(x)\varphi(y) = \varphi(y)\varphi(x) = \mu(\mu(x, y), \cdot)$ by the commutativity and associativity of $\mu$. Furthermore, for all $v, w \in V_0$, 
    \[
        \beta(\varphi(x)(v), w) = \beta(v, \varphi(x)w).
    \]
    By the spectral theorem for symmetric operators, $\varphi(x)$ is diagonalizable. Since all $\varphi(x)$ commute, they are simultaneously diagonalizable. Thus, we can choose a basis $\{e_i\}$ of $V_0$ such that $\phi(A_0)$ lands in the subalgebra $\mathfrak{h} \cong \mathbb{R}^n$ of diagonal $n \times n$ matrices. Since $\varphi(x)(\eta) = \mu(x, \eta) = x$, $\varphi$ is faithful. Thus, by dimension counting, $A_0 \cong \mathbb{R}^n$.
    
    Finally, if we take the basis $\{e_i\}$ of $A_0$ with $\varphi(e_i) = \operatorname{diag}(0, \ldots, 1, \ldots, 0)$ the diagonal matrix with $1$ in the $i$-th entry, then $\mu(e_i, e_j) = \delta_{ij} e_i$. Thus, $\epsilon(e_i) = \beta(e_i, e_i) > 0$ (note that $\beta$ is not necessarily the standard dot product with respect to this basis). 
\end{proof}
\begin{remark}\label{complex_diag_matrices}
    In the proof above, the representation $\varphi$ extends to a faithful representation of $A$ as $n \times n$ complex diagonal matrices (as an associative algebra). The extra Frobenius form structure then assigns a positive ``weight'' to each of the $n$ entries. 
\end{remark}

\Cref{structure_thm_unitary} gives a classification of unitary TQFTs of dimension $n$: they are completely determined by the $n$ positive real numbers $\epsilon(e_1), \ldots, \epsilon(e_n)$ that define the Frobenius form. In (\cite{durhuus1994}), Durhuus and Jonsson also gives an equivalent classification using the spectrum of the handle operator $H = \mu \delta: V \to V$. 

\begin{corollary}[Durhuus and Jonsson (\cite{durhuus1994})]\label{Durhuus_thm}
    Let $A \cong \mathbb{R}^n \otimes_{\mathbb{R}} \mathbb{C}$ be the $^*$-Frobenius algebra associated to a unitary TQFT $Z$. Suppose $\{e_i\}$ is the standard basis of $\mathbb{R}^n$ with $\mu(e_i, e_j) = \delta_{ij} e_i$, $\epsilon(e_i) > 0$. Then each $e_i$ is an eigenvector of the handle operator $H = \mu \delta: V \to V$ with eigenvalue $\lambda_i = 1/\epsilon(e_i)$. 
\end{corollary}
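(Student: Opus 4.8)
The plan is to compute the comultiplication $\delta$ on each basis vector $e_i$ and then apply $\mu$; unitarity enters only through the normal form already established in \Cref{structure_thm_unitary}. The one structural fact I would invoke is the standard description of the comultiplication of a commutative Frobenius algebra in terms of dual bases: if $\{f_i\}$ is a basis of $V$ and $\{f^i\}$ its dual basis with respect to the nondegenerate form $\beta(x,y) = \epsilon(\mu(x,y))$ (so $\epsilon(\mu(f_i, f^j)) = \delta_{ij}$), then the copairing is $\gamma = \delta(\eta) = \sum_i f_i \otimes f^i$ and $\delta(x) = \sum_i \mu(x, f_i) \otimes f^i$ for all $x \in V$. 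Equivalently, $H = \mu\delta$ is multiplication by the canonical element $\omega = \sum_i \mu(f_i, f^i)$.

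First I would pin down the dual basis for the given $\{e_i\}$. Since $\mu(e_i, e_j) = \delta_{ij} e_i$, we get $\beta(e_i, e_j) = \epsilon(\mu(e_i,e_j)) = \delta_{ij}\,\epsilon(e_i)$, so $\beta$ is diagonal in this basis with (positive) diagonal entries $\epsilon(e_i)$, whence $e^i = e_i/\epsilon(e_i)$. Plugging into the formula above,
\[
    \delta(e_j) \;=\; \sum_i \mu(e_j, e_i) \otimes e^i \;=\; e_j \otimes e^j \;=\; \frac{1}{\epsilon(e_j)}\, e_j \otimes e_j,
\]
and therefore
\[
    H(e_j) \;=\; \mu\bigl(\delta(e_j)\bigr) \;=\; \frac{1}{\epsilon(e_j)}\,\mu(e_j, e_j) \;=\; \frac{1}{\epsilon(e_j)}\, e_j .
\]
This is precisely the assertion: $e_j$ is an eigenvector of $H$ with eigenvalue $\lambda_j = 1/\epsilon(e_j)$, which is a positive real by \Cref{structure_thm_unitary}. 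Since the $e_j$ form a $\mathbb{C}$-basis of $V$, this simultaneously exhibits $H$ as diagonalizable with spectrum $\{1/\epsilon(e_i)\}$.

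The only real obstacle is bookkeeping: making sure the comultiplication formula is stated in the conventions used here, i.e. that the copairing $\gamma$ as defined in \Cref{sec_hermitian} is indeed $\beta^{-1}$ and that $\delta(x) = (\mu \otimes \operatorname{id})(x \otimes \gamma)$. The latter follows from the Frobenius relation $\delta\mu = (\mu\otimes\operatorname{id})(\operatorname{id}\otimes\delta)$ applied to $x \otimes \eta$ together with $\delta(\eta) = \gamma$; alternatively one can read all of it off the genus-one cobordism with one in-boundary and one out-boundary. Once that is fixed the result is a two-line computation, and note that the eigenvalue identity $H e_i = \epsilon(e_i)^{-1} e_i$ holds for any $^*$-Frobenius algebra of this form — positivity is only needed to conclude that the $\lambda_i$ are positive.
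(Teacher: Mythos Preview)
Your argument is correct and is essentially the same as the paper's: both compute the copairing $\gamma$ in the idempotent basis (you via the dual-basis description $e^i = e_i/\epsilon(e_i)$, the paper via the snake relation, yielding the identical $\gamma = \sum_i \epsilon(e_i)^{-1} e_i \otimes e_i$), then feed it through $H = \mu\delta$. The only cosmetic difference is that you evaluate $H$ on each $e_j$ directly, while the paper computes $H(x)$ for general $x$ and identifies it as left multiplication by $\sum_i \lambda_i e_i$.
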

\begin{proof}
    In (\cite{durhuus1994}), Durhuus and Jonsson start with Atiyah's axioms and compute the handle operator in terms of the generators. Then they examine its eigenvalues and argue that they are positive and they determine the TQFT. However, with \Cref{structure_thm_unitary}, we can take advantage of the nice structure of the Frobenius algebra associated to $Z$ and obtain the spectrum of $H$ more directly. 
    
    Again, it suffices to deal with the real algebra $A_0 = \mathbb{R}^n$. By the snake relation, 
    \begin{equation}\label{snake_relation}
        \operatorname{id} = (\operatorname{id} \otimes \beta)(\gamma \otimes \operatorname{id}).
    \end{equation}
    If $\gamma = \sum a_{ij} e_i \otimes e_j$, applying both sides of \cref{snake_relation} to a basis element $e_k$ gives
    \begin{align*}
        e_k &= (\operatorname{id} \otimes \beta)(\sum a_{ij} e_i \otimes e_j \otimes e_k)\\
        &= \sum a_{ij} \cdot e_i \cdot \beta(e_j \otimes e_k)\\
        &= \sum a_{ik} \epsilon(e_k) \cdot e_i.\\
    \end{align*}
    Thus, $\gamma = \sum_{i=1}^n \frac{1}{\epsilon(e_i)} e_i \otimes e_i = \sum_{i=1}^n \lambda_i e_i \otimes e_i$.
    Now we explicitly compute the handle operator,
    \begin{align*}
        H(x) 
        &= \mu (\delta (x))\\
        &= \mu (\operatorname{id} \otimes \mu) (\gamma \otimes \operatorname{id}) (x)\\
        &= \mu (\operatorname{id} \otimes \mu) \left(\sum_{i=1}^{n} \lambda_i e_i \otimes e_i \otimes x \right)\\
        &= \sum_{i=1}^{n} \lambda_i \mu(e_i, \mu(e_i, x))\\
        &= \mu \left(\sum_{i=1}^{n} \lambda_i \mu(e_i, e_i), x \right)\\
        &= \varphi \left(\sum_{i=1}^{n} \lambda_i e_i \right) x.
    \end{align*}
    Thus, the handle operator can be seen as left multiplication by the matrix $\operatorname{diag}(\lambda_1, \lambda_2, \ldots, \lambda_n)$, which has eigenvectors $e_i$ and eigenvalues $\lambda_i = 1/\epsilon(e_i)$. 
\end{proof}

In (\cite{sawin1995}), Sawin shows that the Frobenius algebra $A$ associated to a unitary TQFT $Z$ is a $C^*$-Frobenius algebra according to the following definition (\Cref{c_star_frob_def}). We prove this as another corollary to \Cref{structure_thm_unitary}. 

\begin{definition}\label{c_star_frob_def}
    As in (\cite{sawin1995}), a \textit{$C^*$-Frobenius algebra} is a $^*$-Frobenius algebra $A$ satisfying $\beta(J(x), x) > 0$ for all $x \neq 0$ (note that $J(\mu(J(x), x)) = \mu(J(x), x)$ by definition, so $\beta(J(x), x)$ is always real by \Cref{structure_thm}), together with a norm $|| \cdot ||$ such that for all $x \in V$, $||\mu(J(x),  x)|| = ||x||  \cdot ||J(x)||$.
\end{definition}

\begin{corollary}[Sawin (\cite{sawin1995})]\label{c_star_Frob}
    Let $A$ be the $^*$-Frobenius algebra associated to a unitary TQFT $Z$. Let $\varphi: A \to \operatorname{End}(V)$ be the faithful representation of $A$ as complex diagonal matrices as in \Cref{complex_diag_matrices}. Then $A$ admits a canonical $C^*$-Frobenius algebra structure with the norm given by 
    \[
        ||x|| = ||\varphi(x)||,
    \]
    for $x \in V$, where the norm on the right hand side is the operator norm on $n \times n$ complex matrices. 
\end{corollary}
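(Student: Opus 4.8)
The plan is to reduce everything to the explicit description of $A$ supplied by \Cref{structure_thm_unitary}. I would fix the basis $\{e_i\}$ of $A_0 \cong \mathbb{R}^n$ with $\mu(e_i, e_j) = \delta_{ij} e_i$ and $\epsilon(e_i) > 0$, which is at the same time a $\mathbb{C}$-basis of $A = A_0 \otimes_{\mathbb{R}} \mathbb{C}$. In these coordinates, writing $x = \sum_i z_i e_i$ with $z_i \in \mathbb{C}$, the representation of \Cref{complex_diag_matrices} is $\varphi(x) = \operatorname{diag}(z_1, \ldots, z_n)$, the involution is $J(x) = \sum_i \overline{z_i}\, e_i$ (since $J$ is conjugate linear and fixes each $e_i$), and hence $\mu(J(x), x) = \sum_i |z_i|^2 e_i$. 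With this in hand there are exactly three things to check: that $\beta(J(x), x) > 0$ for every $x \neq 0$; that $\|x\| := \|\varphi(x)\|$ is a norm on $A$; and that $\|\mu(J(x), x)\| = \|x\| \cdot \|J(x)\|$.

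The positivity of $\beta(J(x), x)$ is the quickest part. By \Cref{Frob_with_involution} we have $\beta(v, w) = h(v, J(w))$, so $\beta(J(x), x) = h(J(x), J(x))$, which is strictly positive for $x \neq 0$ because $h$ is positive definite (unitarity) and $J$ is bijective; concretely, using $\beta(e_i, e_j) = \epsilon(\mu(e_i, e_j)) = \delta_{ij}\epsilon(e_i)$, this is just $\sum_i \epsilon(e_i)\,|z_i|^2$. For the norm, I would observe that $\varphi$ is an injective algebra homomorphism and that the operator norm is a genuine norm on $\operatorname{End}(V)$, so its pullback along $\varphi$ is a norm on $A$, with definiteness coming from faithfulness of $\varphi$. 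The crucial point is that $\varphi$ is a $*$-homomorphism, $\varphi(J(x)) = \varphi(x)^*$: in the basis $\{e_i\}$ the matrix $\varphi(x)$ is diagonal, and conjugate transposition of a diagonal matrix is precisely entrywise conjugation, which is exactly how $J$ acts. Since $\operatorname{End}(V)$ with the operator norm is an honest $C^*$-algebra, the $C^*$-identity then gives $\|\mu(J(x), x)\| = \|\varphi(x)^*\varphi(x)\| = \|\varphi(x)\|^2 = \|x\|^2$, while $\|J(x)\| = \|\varphi(x)^*\| = \|\varphi(x)\| = \|x\|$; multiplying these out gives $\|\mu(J(x), x)\| = \|x\| \cdot \|J(x)\|$.

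The one step that carries any subtlety is the identification $\varphi(J(x)) = \varphi(x)^*$, and this is precisely where \Cref{structure_thm_unitary} does the work: after simultaneously diagonalizing left multiplication, the abstract conjugate linear involution $J$ becomes the transparent operation of entrywise conjugation, so it visibly matches the conjugate transpose. (If one preferred, the same argument runs with the adjoint with respect to $h$ in place of the conjugate transpose, using that left multiplication operators are $h$-self-adjoint on the real part.) Finally, I would note that the norm is independent of auxiliary choices — the operator norm of $\operatorname{diag}(z_1, \ldots, z_n)$ equals $\max_i |z_i|$ for the operator norm induced by any $\ell^p$-norm on $\mathbb{C}^n$ — which is what justifies calling the resulting $C^*$-Frobenius structure canonical.
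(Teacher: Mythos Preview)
Your proof is correct and follows essentially the same route as the paper: both reduce to the diagonal-matrix description furnished by \Cref{structure_thm_unitary}, identify $J$ with entrywise conjugation, pull back the operator norm along the faithful $\varphi$, and use the $C^*$-identity for matrices to obtain $\|\mu(J(x),x)\| = \|x\|\cdot\|J(x)\|$. Your positivity argument via $\beta(J(x),x) = h(J(x),J(x))$ is a pleasant shortcut relative to the paper's direct computation of $\epsilon(\mu(J(x),x)) = \sum_i \epsilon(e_i)\,|z_i|^2$, but since you also supply that computation the two proofs are effectively interchangeable.
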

\begin{proof}
    By \Cref{structure_thm_unitary}, as a $^*$-algebra, $A \cong \mathbb{R}^n \otimes_{\mathbb{R}} \mathbb{C}$, the algebra of $n \times n$ complex diagonal matrices, with involution $J$ given by complex conjugation of matrices. Under this representation, $A$ is endowed with the natural operator norm on matrices. (In particular, the norm is just the maximal entry in absolute value for diagonal matrices). It is easy to check that for all $x \in V$, 
    \begin{align*}
        ||\mu(J(x), x)|| 
        &= ||\overline{\varphi(x)} \varphi(x)||\\
        &= ||\varphi(x)|| \cdot ||\overline{\varphi(x)}||\\
        &= ||x|| \cdot ||J(x)||.
    \end{align*}
    
    Finally, for all $x \in V$, $x \neq 0$, 
    \[
        \beta(J(x), x) = \epsilon(\mu(J(x), x)) > 0
    \]
    since $\varphi(\mu(J(x), x))$ is a diagonal matrix whose entries are the squares of the norms of the entries of $\varphi(x)$ and by \Cref{structure_thm_unitary}, $\epsilon(e_i) > 0$ for all $e_i$ with $\varphi(e_i) = \operatorname{diag}(0, \ldots, 1, \ldots, 0)$. 
\end{proof}
\phantom{a}

\printbibliography
\end{document}